\newcommand{\ts}[1]{\textsuperscript{#1}}
\newcommand{\ds}[1]{\displaystyle{#1}}
\newcommand{\bs}[1]{\boldsymbol{#1}}
\newcommand{\wt}[1]{\widetilde{#1}}
\newcommand{\mb}[1]{\mathbb{#1}}
\newcommand{\da}[1]{\stackrel{\leftrightarrow}{#1}}
\newtheorem{lemma}{Lemma}
\newtheorem{theorem}{Theorem}
\def\BibTeX{{\rm B\kern-.05em{\sc i\kern-.025em b}\kern-.08em
    T\kern-.1667em\lower.7ex\hbox{E}\kern-.125emX}}
\begin{document}

\title{Tight Bounds on Polynomials and Its Application to Dynamic Optimization Problems}

\author{Eduardo M. G. Vila, Eric C. Kerrigan, \IEEEmembership{Senior Member, IEEE}, and Paul Bruce
\thanks{ 
This work was supported by the Science and Technology Facilities Council (STFC) under a doctoral training grant (ST/V506722/1). For the purpose of open access, the authors have applied a creative commons attribution (CC BY) licence  to any author accepted manuscript version arising. This study does not involve any underlying research data. All results are derived from theoretical analysis and are fully described within the article.}
\thanks{E. M. G. Vila is with the Department of Electrical and Electronic Engineering, Imperial College London, SW7 2AZ, UK (e-mail: {\tt\small eduardo.vila16@alumni.imperial.ac.uk}).}
\thanks{E. C. Kerrigan is with the Department of Electrical and Electronic Engineering and the Department of Aeronautics, Imperial College London, SW7 2AZ, UK (e-mail: {\tt\small e.kerrigan@imperial.ac.uk}).}
\thanks{P. Bruce is with the Department of Aeronautics, Imperial College London, SW7 2AZ, UK (e-mail: {\tt\small p.bruce@imperial.ac.uk}).}
}

\maketitle

\begin{abstract}
This paper presents a pseudo-spectral method for Dynamic Optimization Problems (DOPs) that allows for tight polynomial bounds to be achieved via flexible sub-intervals. 
The proposed method not only rigorously enforces inequality constraints, but also allows for a lower cost in comparison with non-flexible discretizations. 
Two examples are provided to demonstrate the feasibility of the proposed method to solve optimal control problems.
Solutions to the example problems exhibited up to a tenfold reduction in relative cost.
\end{abstract}

\begin{IEEEkeywords}
Constrained Control, Optimal Control, Optimal Estimation, Predictive Control.
\end{IEEEkeywords}

\section{Introduction}
\label{sec:introduction}

\subsection{Constraining Polynomials}

Polynomials are attractive for approximating functions because any continuous function on a bounded interval can be approximated with arbitrary accuracy by a polynomial \cite[Ch.~6]{trefethen_approximation_2019}. This type of approximation offers a rapid convergence rate under certain smoothness conditions \cite[Ch.~7]{trefethen_approximation_2019}. Together with numerical considerations, these theoretical advantages make polynomials the favored option for function approximation. 

A severe limitation of using polynomials arises in the presence of constraints.
It is not trivial to ensure that a polynomial $p$ satisfies some lower and upper bounds, $p_\ell$ and $p_u$, respectively, throughout a finite interval $[t_a, t_b]$, i.e., that
\begin{equation}
    p_\ell \leq p(t) \leq p_u
    \quad
    \forall t \in [t_a, t_b].
\label{eq:poly}
\end{equation}
Often in practice, only a finite number of sample points of~$p$ are constrained, with no guarantee of constraint satisfaction between the samples.


To rigorously enforce \eqref{eq:poly}, one may represent $p$ as a sum-of-squares (SOS) to ensure $p$ is non-negative.
SOS conditions can guarantee non-negativity over the entire domain, or bounded interval \cite[Sect.~1.21]{szego_orthogonal_1939}. 
In practice, SOS conditions are formulated as semi-definite constraints, which require specialized solution techniques.
The lack of compatibility with nonlinear optimization methods renders the SOS technique unappealing for many problems.

\begin{figure}
    \centering
    \includegraphics[width=\linewidth]{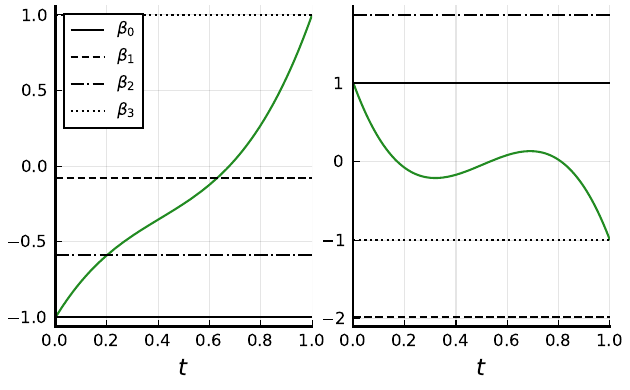}
    \vspace*{-9mm}\caption{Examples of 3\ts{rd} degree polynomials and their Bernstein coefficients~$\beta_j$.}
    \label{fig:hull_examples}
\end{figure}

A promising approach is to express $p$ in the Bernstein polynomial basis (detailed in Section~\ref{sec:bernstein}).
Under this basis, the polynomial is guaranteed to be inside the convex hull of its coefficients, in the interval $[0, 1]$ \cite{cargo_bernstein_1966}.
Thus, \eqref{eq:poly} can be satisfied by directly constraining the Bernstein coefficients of~$p$ to lie between $p_\ell$ and $p_u$.
Figure~\ref{fig:hull_examples} shows two examples of polynomials with their Bernstein coefficients.

\subsection{Achieving Tight Bounds}

In general, the bounds provided by the Bernstein coefficients are not always tight, i.e., they are potentially conservative approximations of the exact minimum and maximum values of $p$ in the interval~$[0, 1]$.
The impact of this conservatism is illustrated by the following simple approximation problem.

\begin{figure}
    \centering
    \includegraphics[width=\linewidth]{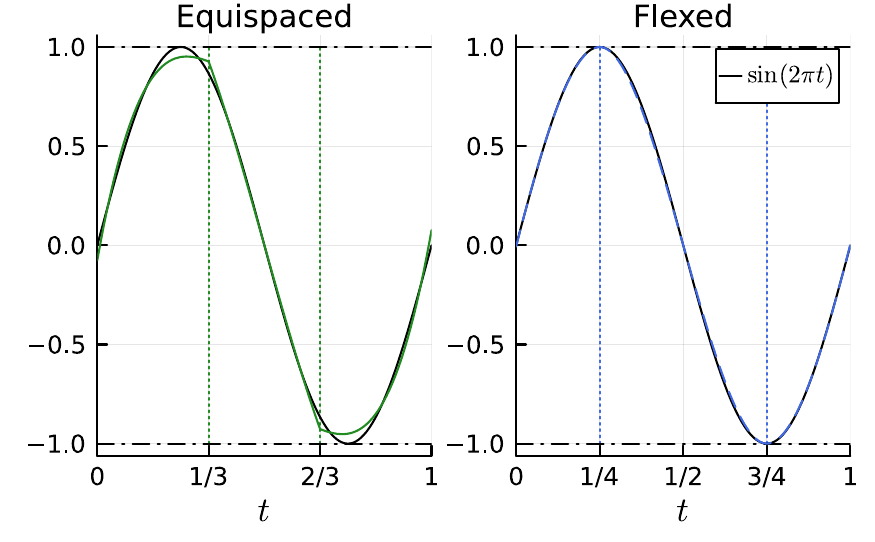}
    \vspace*{-9mm}\caption{Bernstein-constrained piecewise polynomial approximations with equispaced and flexed sub-intervals.}
    \label{fig:sine_solutions}
\end{figure}

Consider using polynomials to approximate the function
\begin{equation}
    t \mapsto \sin(2 \pi t),
    \quad
    t \in [0, 1],
\label{eq:sine}
\end{equation}
such that the approximation is constrained between -1 and~1. 
Piecewise polynomials are used with two different partitions of $[0, 1]$: a partition with equispaced sub-intervals and a partition with \emph{flexed} (i.e., not equispaced) sub-intervals.
To satisfy the constraints, the Bernstein coefficients of the polynomials are constrained between -1 and 1 (the remaining details can be found in the Appendix).
Figure~\ref{fig:sine_solutions} shows how the bounds may or may not be tight, depending on how the interval $[0, 1]$ is partitioned.
With equispaced sub-intervals, the Bernstein bounds on the polynomials are not tight, resulting in a significant approximation error near the constraints.
With an appropriate \emph{flexing} of the sub-intervals, the bounds become tight, and the approximation error is greatly reduced.

\begin{figure}
    \centering
    \includegraphics[width=\linewidth]{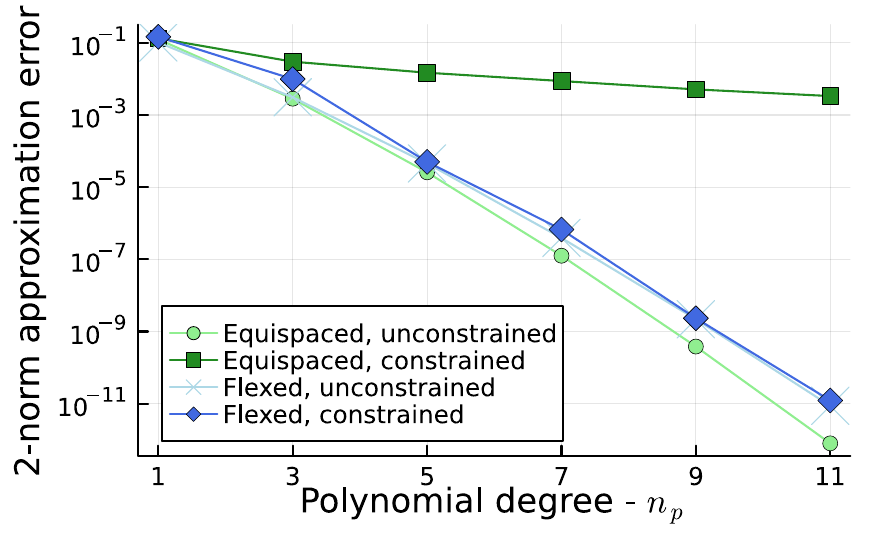}
    \vspace*{-9mm}\caption{Approximation errors for equispaced and flexed sub-intervals, with and without Bernstein constraints.}
    \label{fig:sine_convergence}
\end{figure}

An even larger impact is seen in the rate of convergence of the approximation, for increasing polynomial degrees.
Figure~\ref{fig:sine_convergence} shows how conservative Bernstein bounds can impact the rate of convergence of the approximation error.
With equispaced sub-intervals, the constraints hinder the otherwise sharp rate of convergence, whereas with flexed sub-intervals, the unconstrained rate of convergence is mostly preserved. 

\subsection{Dynamic Optimization}

In this article, the ability to tightly constrain polynomials is developed, with an emphasis on computing solutions to Dynamic Optimization Problems (DOPs).
By combining the properties of Bernstein polynomials with sub-interval flexibility, the proposed method is able to provide rigorous constraint satisfaction, often without compromising the optimality of the solution to the DOP.

DOPs are concerned with finding state and input trajectories that minimize a given cost function, subject to a variety of constraints.
Sub-classes of DOPs include optimal control, also known as trajectory optimization, state estimation and system identification problems, as well as initial value and boundary value problems of differential (algebraic) equations.

Due to their high convergence rate, polynomial-based methods are the state of the art for numerically solving DOPs~\cite{ross_review_2012}.  
By using orthogonal polynomials, these pseudo-spectral methods can obtain an exponential rate of convergence, known as the spectral rate.
Recently, a long-standing ill-conditioning issue of pseudo-spectral methods has been resolved by using Birkhoff interpolation, in lieu of the traditional Lagrange interpolation \cite{koeppen_fast_2019}.
For the most part however, pseudo-spectral methods do not rigorously enforce polynomial constraints as in \eqref{eq:poly}, instead only constraining samples of the trajectory values.

Recently, a non-pseudo-spectral method purely based on Bernstein polynomials has been proposed for DOPs \cite{cichella_optimal_2021}.
Despite the advantageous constraint satisfaction properties, the method results in a slower rate of convergence.
Later, a pseudo-spectral method that uses Bernstein constraints was proposed \cite{allamaa_safety_2023}, capable of attaining a spectral rate of convergence.
This method was extended to use (fixed) sub-intervals \cite{allamaa_real-time_2024}, so as to reduce (but not eliminate) the conservatism of the Bernstein constraints. 

\subsection{Contributions and Outline}

In this article, we propose a pseudo-spectral DOP method that uses \emph{flexible} sub-intervals, which may lead to tightly constrained polynomials.
Previously, the use of flexible sub-intervals was driven by their ability to represent discontinuities in the solutions, both in collocation methods \cite{ross_pseudospectral_2004} and in integrated-residual methods \cite{nita_fast_2022}. The proposed method thus inherits this property.

The contributions of this article are as follows:
\begin{itemize}
    \item We show that monotonic polynomials are not necessarily tightly bounded by their Bernstein coefficients, via an example. 
    \item We prove that a finite number of sub-intervals is sufficient to tightly bound (piecewise) monotonic polynomials.
    \item We propose a method for obtaining numerical solutions to DOPs, where a flexible discretization is used to promote tight bounds on constrained dynamic variables represented by polynomials.
\end{itemize}

Section~\ref{sec:bernstein} introduces the Bernstein polynomial basis and presents theoretical results on tight polynomial bounds.
Section~\ref{sec:problem} defines a general DOP formulation, together with common equivalent definitions.
Section~\ref{sec:discretization} describes the flexible DOP discretization.
Section~\ref{sec:example} demonstrates the proposed method on two example DOPs.
Finally, Section~\ref{sec:conclusion} provides concluding remarks and directions for further research.

\section{Tightly Bounded Polynomials}
\label{sec:bernstein}

\subsection{The Bernstein Basis}

Let $p : [0, 1] \rightarrow \mb R$ be a polynomial of degree at most $n_p$.
One may write $p$ as
\begin{equation}
    p(t)
    = 
    \alpha_0 + \alpha_1 t + ... + \alpha_{n_p} t^{n_p}
    =
    \sum_{k=0}^{n_p} \alpha_k t^k,
\end{equation}
where $\{\alpha_k\}_{k=0}^{n_p}$ are the \emph{monomial} coefficients of $p$.
Under this representation, there is little hope of using the coefficients to provide bounds on $p$, let alone obtaining tight bounds.
To that effect, a strategic change of polynomial basis is used.

In the Bernstein polynomial basis, we represent $p$ as
\begin{equation}
    p(t)
    =
    \sum_{j=0}^{n_p} \beta_j b_{n_p,j}(t),
\label{eq:bases}
\end{equation}
where $\{\beta_j\}_{j=0}^{n_p}$ are the \emph{Bernstein} coefficients and $\{ b_{n_pj} \}_{j=0}^{n_p}$ are the Bernstein basis polynomials. The latter are given by
\begin{equation}
    b_{n_p,j}(t) := \binom{n_p}{j} t^j (1-t)^{(n_p-j)},
\label{eq:bernstein_basis}
\end{equation}
which are depicted in Figure~\ref{fig:basis} for the case of $n_p=4$.
\begin{figure}
    \centering
    \includegraphics[width=\linewidth]{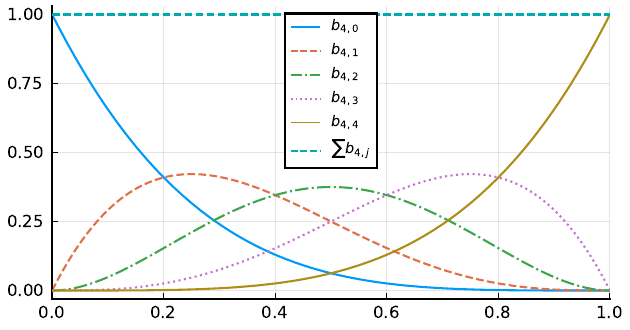}
    \vspace*{-9mm}\caption{Bernstein basis polynomials of degree 4, along with their sum.}
    \label{fig:basis}
\end{figure}

This change of basis can be performed on the coefficients of $p$ using the relationship\cite{cargo_bernstein_1966}
\begin{equation}
    \beta_j = \sum_{k=0}^j \alpha_k \left. \binom{j}{k} \middle/ \binom{n_p}{k} \right. .
    \label{eq:bernstein_coefficients}
\end{equation}
In matrix form, we write
\begin{equation}
    \beta = B \alpha,
\end{equation}
where $\beta$ and $\alpha$ are ordered column vectors of $\{ \beta_j\}_{j=0}^{n_p}$ and $\{ \alpha_k\}_{k=0}^{n_p}$, respectively, and each element \( B_{j,k} \) of the lower triangular matrix \( B \) is defined as:
\begin{equation}
B_{j,k} = 
\begin{cases}
\left. \binom{j}{k} \middle/ \binom{n_p}{k} \right. & \text{if}~ k \leq j, \\
0 & \text{otherwise}.
\end{cases}
\end{equation}

The following known result \cite{cargo_bernstein_1966} states how the Bernstein coefficients $\{\beta_j\}_{j=0}^{n_p}$ can provide bounds for $p$.
\begin{lemma}
    Let $p_{\min}$ and $p_{\max}$ be the minimum and maximum values of $p(t)$~$\forall t \in [0, 1]$.
    In $[0, 1]$, $p$ is bounded by
    \begin{equation}
        \min \{\beta_j\}_{j=0}^{n_p} \leq p_{\min} 
        \quad \text{and} \quad
        \max \{\beta_j\}_{j=0}^{n_p} \geq p_{\max}.
    \label{eq:hull}
    \end{equation}
    Moreover, the bounds are tight in the following cases: 
    \begin{align}
        \beta_0 &= \min \{\beta_j\}_{j=0}^{n_p}
        &&\iff&
        \beta_0 &= p_{\min},\label{eq:min_increasing}\\
        \beta_{n_p} &= \max \{\beta_j\}_{j=0}^{n_p}
        &&\iff&
        \beta_{n_p} &= p_{\max},\label{eq:max_increasing}\\
        \beta_0 &= \max \{\beta_j\}_{j=0}^{n_p}
        &&\iff&
        \beta_0 &= p_{\max},\label{eq:max_decreasing}\\
        \beta_{n_p} &= \min \{\beta_j\}_{j=0}^{n_p}
        &&\iff&
        \beta_{n_p} &= p_{\min}.\label{eq:min_decreasing}
    \end{align}
\label{th:bounds}
\end{lemma}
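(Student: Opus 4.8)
The plan is to derive everything from three structural facts about the Bernstein basis on $[0,1]$: the partition of unity $\sum_{j=0}^{n_p} b_{n_p,j}(t) = 1$, which follows from the binomial theorem applied to $(t+(1-t))^{n_p}$; the nonnegativity $b_{n_p,j}(t) \ge 0$ for $t \in [0,1]$, immediate from \eqref{eq:bernstein_basis}; and the endpoint interpolation $p(0)=\beta_0$ and $p(1)=\beta_{n_p}$, since $b_{n_p,j}(0)=\delta_{0j}$ and $b_{n_p,j}(1)=\delta_{n_p j}$. With these in hand, the bound \eqref{eq:hull} is a convexity statement: for each fixed $t\in[0,1]$ the value $p(t)=\sum_j \beta_j\, b_{n_p,j}(t)$ is a convex combination of the coefficients $\{\beta_j\}$, so that $\min_j \beta_j \le p(t) \le \max_j \beta_j$; taking the infimum and supremum over $t$ gives $\min_j \beta_j \le p_{\min}$ and $\max_j \beta_j \ge p_{\max}$.

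I would then dispatch the four ``forward'' implications of \eqref{eq:min_increasing}--\eqref{eq:min_decreasing} by a single squeeze argument. For \eqref{eq:min_increasing}, assume $\beta_0 = \min_j \beta_j$; the bound just proved gives $\beta_0 = \min_j \beta_j \le p_{\min}$, while endpoint interpolation gives $p_{\min} \le p(0) = \beta_0$, and the two inequalities force $\beta_0 = p_{\min}$. The other three forward implications are identical after interchanging $0$ with $n_p$ and $\min$ with $\max$, each pairing the relevant half of \eqref{eq:hull} with $p(0)=\beta_0$ or $p(1)=\beta_{n_p}$.

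The reverse implications are the substantive content, and I expect them to be the main obstacle. Consider deducing $\beta_0 = \min_j \beta_j$ from $\beta_0 = p_{\min}$: the hypothesis only says the global minimum of $p$ is attained at the left endpoint with value $\beta_0$. Differentiating the Bernstein form and evaluating at $t=0$ gives $p'(0)=n_p(\beta_1-\beta_0)\ge 0$, hence $\beta_1 \ge \beta_0$, but this says nothing about the interior coefficients $\beta_2,\dots,\beta_{n_p-1}$. The crux is to rule out an interior coefficient dropping below $\beta_0$ while the minimum of $p$ stays pinned at $t=0$. Since a monotone polynomial can have a non-monotone Bernstein coefficient sequence, this is exactly the configuration that must be excluded, and I would focus my effort there --- either by an induction on $n_p$ using degree reduction, by the variation-diminishing property of the Bernstein basis applied to $p-\beta_0$, or by identifying the additional hypothesis under which the endpoint coefficient is guaranteed to be extremal.
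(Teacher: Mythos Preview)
The paper does not prove this lemma; it is quoted as a known result with a citation, so there is no in-paper argument to compare against. Your derivation of \eqref{eq:hull} from nonnegativity and partition of unity, and your squeeze argument for the forward implications in \eqref{eq:min_increasing}--\eqref{eq:min_decreasing}, are both correct and standard.

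Your suspicion about the reverse implications is well placed: as written, they are false. Take $n_p=3$ with $(\beta_0,\beta_1,\beta_2,\beta_3)=(0,1,-1,1)$; then
\[
p(t)=3t(1-t)^2-3t^2(1-t)+t^3=t\bigl(7t^2-9t+3\bigr),
\]
and the quadratic factor has discriminant $81-84<0$, so $p(t)>0$ on $(0,1]$ while $p(0)=0$. Thus $\beta_0=p_{\min}=0$, yet $\min_j\beta_j=\beta_2=-1\neq\beta_0$, contradicting the $\Leftarrow$ direction of \eqref{eq:min_increasing}. The correct classical statement (from the cited reference) is that $\min_j\beta_j=p_{\min}$ if and only if the minimal coefficient sits at an endpoint index, $0$ or $n_p$; the biconditionals in the lemma are a slight misrendering of this. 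None of your proposed attacks on the reverse direction --- degree reduction, variation diminishing, induction --- can succeed, because the claim itself fails. Fortunately, the only place the paper invokes the lemma (the proof of Theorem~\ref{th:tight}) uses just the forward implications, which you have established.
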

In other words, Lemma~\ref{th:bounds} states that a polynomial is bounded by the convex hull of its Bernstein coefficients, as shown in Figure~\ref{fig:hull_examples}.
Since $p(0) = \beta_0$ and $p(1) = \beta_{n_p}$, \emph{tight} bounds are available when the convex hull of $\{\beta_j\}_{j=0}^{n_p}$ coincides with the convex hull of $\{p(0), p(1)\}$. Figure~\ref{fig:hull_examples} shows an example of a tightly bounded polynomial and an example of a non-tightly bounded polynomial.

\subsection{Monotonicity}

\begin{figure}
    \centering
    \includegraphics[width=\linewidth]{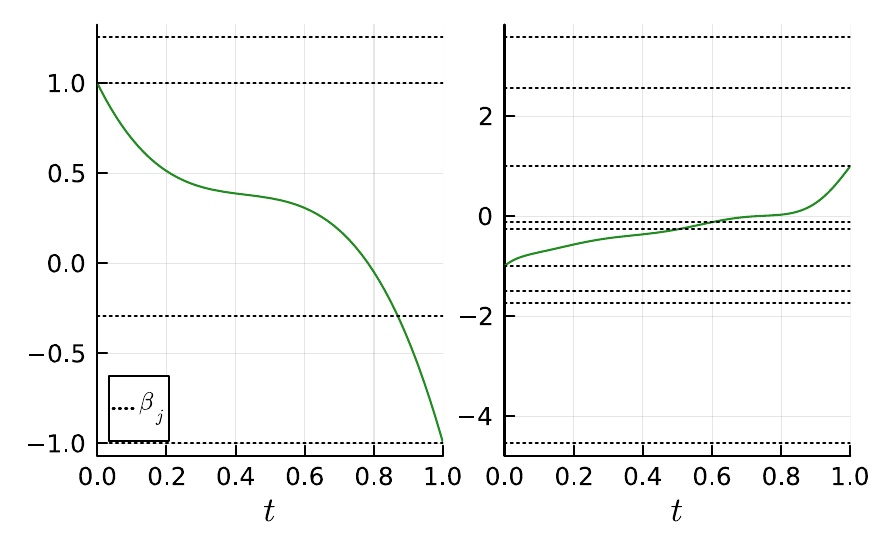}
    \vspace*{-9mm}\caption{Examples of monotonic polynomials that are not tightly bounded by their Bernstein coefficients. The polynomials are given in Appendix~A.}
    \label{fig:counter_examples}
\end{figure}

It may appear as if monotonic polynomials can always be tightly bounded.
In practice, such a result would help quantify the number of required sub-intervals for a tightly bounded approximation, should the number of monotonic segments of the target function be known.
Unfortunately, this is not true: Figure~\ref{fig:counter_examples} provides some examples.

Should the target function indeed have a finite number of monotonic segments, one may ask if a finite number of sub-intervals is sufficient to achieve a tightly bounded approximation. This result is formalized and proved in the following sub-section.

\subsection{Finite Number of Sub-Intervals}

The main result requires the following inequality.

\begin{lemma}
Let $\{c_k\}_{k=1}^{n_p}$ be a finite sequence of real numbers with $c_1 > 0$. There exists a scalar $0 < h < 1$ such that
\begin{equation}
    0 \leq c_1 h + \sum_{k=2}^{n_p} c_k h^k.
\label{eq:suff}
\end{equation}
\label{th:geometric}
\end{lemma}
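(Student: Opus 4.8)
The plan is to exploit the fact that the right-hand side of \eqref{eq:suff} is a polynomial in $h$ with no constant term whose lowest-order coefficient $c_1$ is strictly positive; the linear term therefore dominates the higher-order terms as $h \to 0^+$, forcing the whole expression to be positive for sufficiently small $h$. Concretely, writing $f(h) := c_1 h + \sum_{k=2}^{n_p} c_k h^k$, I would first factor out $h$ to obtain $f(h) = h\,g(h)$ with $g(h) := c_1 + \sum_{k=2}^{n_p} c_k h^{k-1}$. Since we only consider $h > 0$, the sign of $f(h)$ coincides with that of $g(h)$, so it suffices to exhibit some $h \in (0,1)$ with $g(h) \geq 0$.

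The key step is to bound the tail of $g$. For $0 < h < 1$ and every $k \geq 2$ we have $h^{k-1} \leq h$, so with $M := \sum_{k=2}^{n_p} |c_k|$ the triangle inequality gives $\bigl|\sum_{k=2}^{n_p} c_k h^{k-1}\bigr| \leq M h$, and hence $g(h) \geq c_1 - M h$. Choosing any $h$ with $0 < h < \min\{1,\, c_1/M\}$ — for instance $h = \min\{1/2,\, c_1/(2M)\}$ — then yields $g(h) \geq c_1/2 > 0$, so that $f(h) = h\,g(h) > 0 \geq 0$, which is the desired inequality. The degenerate case $M = 0$, where all higher coefficients vanish, is immediate, since then $f(h) = c_1 h > 0$ for every $h \in (0,1)$.

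I do not anticipate a genuine obstacle here: the argument is elementary once one observes that the absence of a constant term allows the positive linear coefficient $c_1$ to control the behavior near the origin. The only point needing mild care is making the threshold on $h$ explicit and uniform over the coefficient sequence, which the crude bound $M h$ handles cleanly. A non-constructive alternative would simply invoke continuity of $g$ together with $g(h) \to c_1 > 0$ as $h \to 0^+$ to conclude that $g$ is positive throughout some right-neighborhood of zero; I would prefer the explicit version above, since it supplies a concrete admissible value of $h$.
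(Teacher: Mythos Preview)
Your argument is correct. Both your proof and the paper's rest on the same underlying idea---that the positive linear term $c_1 h$ dominates the higher-order terms for sufficiently small $h$---but the executions differ. The paper replaces each $c_k$ (for $k\ge 2$) by $c_{\min}:=\min_{k\ge 2}c_k$, sums the resulting geometric series $\sum_{k=2}^{n_p} h^k$ in closed form, and then reduces the question to showing that $(1-h)/h$ can be made larger than $-c_{\min}/c_1$. You instead factor out $h$ and bound the remaining tail by $Mh$ with $M=\sum_{k\ge 2}|c_k|$, using only the elementary observation $h^{k-1}\le h$ on $(0,1)$. Your route avoids the geometric-series manipulation entirely, is slightly shorter, and delivers an explicit admissible value such as $h=\min\{1/2,\,c_1/(2M)\}$; the paper's version, by contrast, leaves the threshold implicit. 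Either argument suffices for the lemma, and both feed identically into the proof of Theorem~\ref{th:tight}.
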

\begin{proof}
    Let $c_{\min} := \min \{ c_k \}_{k=2}^{n_p}$ be used to provide the lower bound
    \begin{equation}
        0 \leq c_1 h + c_{\min} \sum_{k=2}^{n_p} h^k \leq c_1 h + \sum_{k=2}^{n_p} c_k h^k.
        \label{eq:lower_bound}
    \end{equation}

    The geometric series $S = h^2 + h^3 + ... + h^{n_p}$ can be expressed in closed form by subtracting $hS$ from $S$ and rearranging to obtain
    \begin{equation}
        S = \frac{h^2 - h^{n_p+1}}{1-h},
    \label{eq:geometric_series}
    \end{equation}
    for $0 < h < 1$. Substituting in the left inequality of \eqref{eq:lower_bound}, we have
    \begin{equation}
        0 \leq c_1 h + c_{\min} \frac{h^2 - h^{n_p+1}}{1-h},
    \end{equation}
    which can be simplified to
    \begin{equation}
        \frac{-c_{\min}}{c_1} \leq \frac{1-h}{h - h^{n_p}}.
    \end{equation}
    Introducing the lower bound
    \begin{equation}
        \frac{-c_{\min}}{c_1} \leq \frac{1-h}{h} \leq \frac{1-h}{h - h^{n_p}}
    \end{equation}
    shows that there always exists a sufficiently small $h$ for which $\frac{1-h}{h}$ is larger than any $\frac{-c_{\min}}{c_1}$ (since $\lim_{h \to 0^+} \frac{1 - h}{h} = +\infty$), thus proving the lemma.
\end{proof}


\begin{theorem}
    Let $p$ be a univariate polynomial of degree at most $n_p$  that is monotonic on a finite interval $[t_a, t_b]$. There exists a finite partition of $[t_a, t_b]$, such that~$p$ is tightly bounded by its Bernstein coefficients in every sub-interval of the partition.
\label{th:tight}
\end{theorem}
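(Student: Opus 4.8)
\emph{Proof plan.} The plan is to show that every sufficiently short sub-interval on which $p$ is monotone is tightly bounded, and then to assemble a finite partition out of such sub-intervals. First I would reduce to the increasing case: if $p$ is decreasing, replacing $p$ by $-p$ negates the Bernstein coefficients and swaps the roles of minimum and maximum, leaving tightness and the partition unchanged (equivalently, one invokes \eqref{eq:max_decreasing}--\eqref{eq:min_decreasing} in place of \eqref{eq:min_increasing}--\eqref{eq:max_increasing}). Next, fix a candidate sub-interval $[s, s+h] \subseteq [t_a, t_b]$ and rescale it to $[0,1]$ via $\tau \mapsto s + h\tau$, writing $\tilde{\beta}_j$ for the Bernstein coefficients of the rescaled polynomial. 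Since $p$ is increasing, its minimum and maximum over the sub-interval are attained at the endpoints, so $\tilde{\beta}_0 = p(s) = p_{\min}$ and $\tilde{\beta}_{n_p} = p(s+h) = p_{\max}$; by Lemma~\ref{th:bounds}, tightness on this sub-interval is therefore equivalent to $\tilde{\beta}_0 \le \tilde{\beta}_j \le \tilde{\beta}_{n_p}$ for every $j$.

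The core of the argument is to expand these deviations in powers of the width $h$. Using $\tilde{\alpha}_k = p^{(k)}(s)\,h^k/k!$ together with \eqref{eq:bernstein_coefficients}, I would obtain
\begin{equation}
    \tilde{\beta}_j - \tilde{\beta}_0 = \sum_{k=1}^{j} \frac{p^{(k)}(s)}{k!}\,\frac{\binom{j}{k}}{\binom{n_p}{k}}\, h^{k},
\end{equation}
whose coefficient of $h$ equals $p'(s)\,j/n_p$. This is exactly the setting of Lemma~\ref{th:geometric} with $c_1 = p'(s)\,j/n_p$: wherever $p'(s) > 0$, the lemma guarantees that each such deviation is non-negative for all sufficiently small $h$, and a symmetric expansion about the right endpoint handles $\tilde{\beta}_{n_p} - \tilde{\beta}_j \ge 0$ through $p'(s+h) > 0$. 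Taking $h$ below the finitely many thresholds produced for $j = 1, \dots, n_p$ then yields a tight sub-interval.

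To turn this pointwise statement into a finite partition, I would record that $p'$ is a polynomial of degree at most $n_p - 1$; if $p' \equiv 0$ then $p$ is constant and every partition is tight, so assume $p' \not\equiv 0$, whence its roots --- the critical points of $p$ --- are finite in number. Insert all of them as partition nodes, so that no sub-interval contains a critical point in its interior. On each closed piece strictly between consecutive critical points, $p'$ is bounded below by a positive constant, the higher derivatives appearing in the expansion are uniformly bounded, and Lemma~\ref{th:geometric} therefore produces a single width threshold valid for every starting point; finitely many equal sub-intervals then cover the piece. The only sub-intervals left to treat are those abutting a critical point.

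The hard part will be precisely these boundary sub-intervals, where $p'$ vanishes at the abutting endpoint, so the coefficient $c_1$ of Lemma~\ref{th:geometric} degenerates to zero and the lemma does not apply directly. I would resolve this by observing that monotonicity forces the leading non-vanishing coefficient of the relevant deviation to be strictly positive (the signs of $p^{(m)}$ and of the accompanying $(-1)$-powers conspire correctly, whether the abutting point is an interior inflection or a boundary extremum). If that lowest-order term has order $m$, the deviation factors as $h^{\,m-1}$ times a series whose coefficient of $h$ is this positive quantity, so applying Lemma~\ref{th:geometric} to the series and multiplying back by the positive factor $h^{\,m-1}$ shows the abutting sub-interval is tight for some positive width. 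As there are finitely many critical points, finitely many interior pieces, and finitely many sub-intervals within each, the assembled partition is finite, which proves the theorem.
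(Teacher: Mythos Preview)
Your plan is correct and rests on the same mechanism as the paper: rescale a candidate sub-interval to $[0,1]$, expand its Bernstein coefficients as a polynomial in the width $h$ via \eqref{eq:bernstein_coefficients}, and invoke Lemma~\ref{th:geometric} to secure $\tilde\beta_0\le\tilde\beta_j\le\tilde\beta_{n_p}$ for small $h$. The paper does this anchored only at the left endpoint of $[0,1]$, dispatches the upper inequality by observing $B_{j,k}<1$ (equivalently, $\tilde\beta_{n_p}-\tilde\beta_j=\sum_{k\ge1}(1-B_{j,k})\tilde\alpha_k$ has positive leading term) rather than by a separate right-endpoint expansion, and stops after exhibiting one tight sub-interval $[0,h]$, leaving the passage to a \emph{finite} partition implicit. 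Your version is therefore the same idea but more complete on two fronts the paper glosses over: the critical-point bookkeeping that actually certifies finiteness, and the explicit $h^{m-1}$-factoring for the degenerate case $p'(s)=0$, which the stated hypothesis $c_1>0$ of Lemma~\ref{th:geometric} does not literally cover. Two small points to tidy in your write-up: first, when you claim $p'$ is bounded below on a ``closed piece strictly between consecutive critical points,'' make sure that piece has already been trimmed away from those points --- carve off the abutting sub-intervals first via the factoring trick, then cover what remains with a uniform width; second, your right-endpoint expansion invokes $p'(s+h)$, which itself depends on $h$, so either appeal to the uniform lower bound on $p'$ away from critical points, or (simpler) derive the upper inequality from the same left-endpoint expansion, since its coefficient of $h$ is $(1-j/n_p)\,p'(s)>0$.
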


\begin{proof}
    Without loss of generality, we assume $p$ to be defined on the interval $[0, 1]$, and aim to find a sub-interval $[0, h]$, for some $0 < h < 1$, on which $p$ is tightly bounded by its Bernstein coefficients. 
    
    Let the monomial coefficients of $p$ be $\{\alpha_k\}_{k=0}^{n_p}$ and let $p_h$ be the following linear transformation of $p$,
    \begin{equation}
        p_h(t_h) := p(h t_h) \quad \forall t_h \in [0, 1].
    \end{equation}
    The monomial coefficients of $p_h$ are $\{ \alpha_k h^k\}_{k=0}^{n_p}$, and let $\{\beta_j\}_{j=0}^{n_p}$ be the Bernstein coefficients of $p_h$.
    
    For the case where $p$ is non-decreasing in $[0, 1]$, $p_h(0) = \beta_0$ and $p_h(1) = \beta_{n_p}$, and the conditions for tight bounds \eqref{eq:min_increasing} and \eqref{eq:max_increasing} require that
    \begin{equation}
        \beta_0 \leq \beta_j \leq \beta_{n_p},
    \end{equation}
    for $j \in \{1,2,...,n_p-1\}$. Equivalently, performing the change of basis \eqref{eq:bernstein_coefficients} to the monomial coefficients,
    \begin{equation}
        \alpha_0
        \leq
        \sum_{k=0}^{n_p} B_{j,k} \alpha_k h^k
        \leq
        \sum_{k=0}^{n_p} \alpha_k h^k.
    \end{equation}
    Since $B_{j,0} = 1$ $\forall j \in \{1,2,...,n_p-1\}$, $\alpha_0$ can be subtracted, leaving
    \begin{equation}
        0
        \leq
        \sum_{k=1}^{n_p} B_{j,k} \alpha_k h^k
        \leq
        \sum_{k=1}^{n_p} \alpha_k h^k.
        \label{eq:condition}
    \end{equation}
    In the case that $\alpha_k = 0$ $\forall k \in \{1,2,...,n_p\}$, \eqref{eq:condition} is trivially satisfied. Otherwise, for a sufficiently small $h$:
    \begin{itemize}
        \item The left inequality holds, given Lemma~\ref{th:geometric} and since the first non-zero element of $\{\alpha_k\}_{k=1}^{n_p}$ is positive for a non-decreasing, non-constant polynomial;
        \item The right inequality holds since all the elements of $B_{j,k}$, for $j \in \{1, 2, ...,n_p-1\}, k \in \{1, 2,..., n_p\}$, are strictly less than 1.
    \end{itemize}
    A similar argument can be made for the case where $p$ is non-increasing on $[t_a, t_b]$, using the tight bound conditions~\eqref{eq:max_decreasing} and \eqref{eq:min_decreasing}.
\end{proof}
    
In practice, Theorem~\ref{th:tight} suggests that a sufficient number of flexible sub-intervals must be chosen so as to tightly bound approximating polynomials. The monotonicity assumption is rather mild, since most functions can be approximated by piecewise-monotonic piecewise polynomials. Moreover, it has been shown that monotonic polynomials can approximate monotonic functions with a similar rate of convergence as in the unconstrained case \cite{de_vore_monotone_1977}.

\section{Dynamic Optimization Problems}
\label{sec:problem}

We define a DOP as finding the $n_x$ continuous states $\bs x : [t_0, t_f] \rightarrow \mb R^{n_x}$ and 
$n_u$ inputs $\bs u : [t_0, t_f] \rightarrow \mb R^{n_u}$ that
\begin{align*}
\def\arraystretch{1.2}
\begin{array}{rll}
    \text{minimize} & \multicolumn{2}{l}{m(\bs x(t_0), \bs x(t_f)) 
    + \hspace{-0.35em}
    \ds{\int_{t_0}^{t_f} \hspace{-0.35em}
    \ell(\bs x(t), \bs u(t), t) \textrm{d}t,}}\\ 
    \text{subject to} & b(\bs x(t_0), \bs x(t_f)) = 0,\\
    & r(\dot{\bs x}(t), \bs x(t), \bs u(t), t) = 0 & \forall t \in [t_0, t_f] \, \text{a.e.},\\
    & \bs x(t) \in \mb{X}, \, \bs u(t) \in \mb{U}  & \forall t \in [t_0, t_f].
\end{array}
\tag{P}
\label{eq:dop}
\end{align*}

The cost function combines a boundary cost $m : \mb R^{n_x} \times \mb R^{n_x} \rightarrow \mb R$ with a time-integrated cost $\ell : \mb R^{n_x} \times \mb R^{n_u} \times [t_0, t_f] \rightarrow \mb R$.
As equality constraints, $b : \mb R^{n_x} \times \mb R^{n_x} \rightarrow \mb R^{n_b}$ encompasses the $n_b$ boundary conditions and $r : \mb R^{n_x} \times \mb R^{n_x} \times \mb R^{n_u} \times [t_0, t_f] \rightarrow \mb R^{n_r}$ comprises the $n_r$ dynamic equations.
The latter are enforced \emph{almost everywhere} (a.e.) in the Lebesgue sense, due to the partition of $[t_0, t_f]$ into sub-intervals, as described in the next sub-section.
The sets $\mb X$ and $\mb U$ consist of simple inequality constraints for the states and inputs
\begin{align}
    \mb X &:= \{z \in \mb R^{n_x} \, | \, x_{\ell} \leq z \leq x_u\},\\
    \mb U &:= \{z \in \mb R^{n_u} \, | \, u_{\ell} \leq z \leq u_u\},
\end{align}
where $x_\ell, x_u \in \mb R^{n_x}$ and $u_\ell, u_u \in \mb R^{n_u}$ are the respective lower and upper constraint values.
General inequality constraints of the form
\begin{equation}
    g_\ell \leq g(\dot{\bs x}(t), \bs x(t), \bs u(t), t) \leq g_u
    \quad
    \wt{\forall} t \in [t_0, t_f]
\end{equation}
can be included in \eqref{eq:dop} by incorporating
\begin{equation}
    g(\dot{\bs x}(t), \bs x(t), \bs u(t), t) - \bs s(t) = 0
    \quad
    \wt{\forall} t \in [t_0, t_f]
\end{equation}
in the dynamic equations, with the slack input variables $\bs s$ constrained by $g_\ell$ and $g_u$. 

The method presented in this article can be easily extended to broader classes of DOPs, such as those with variable $t_0$ and~$t_f$, system parameters, and multiple phases.

\section{Flexible Discretizations}
\label{sec:discretization}

\subsection{Flexible Sub-Intervals}

Typically, the interval $[t_0, t_f]$ is partitioned into $n_h$ \emph{fixed} sub-intervals, defined by the values
\begin{equation}
    t_0 < t_1 < t_2 < ... < t_{n_h-1} < t_f.
\label{eq:rigid}
\end{equation}
Alternatively, $[t_0, t_f]$ can be partitioned into $n_h$ \emph{flexible} sub-intervals, defined by
\begin{equation}
    t_0 < \da t_1 < \da t_2 < ... < \da t_{n_h-1} < t_f,
\end{equation}
where $\{ \da t_i \}_{i=1}^{n_h-1}$ are optimization variables. It is often useful to restrict the sizes of the flexible sub-intervals.
We define the flexibility parameters $\{ \phi_i \}_{i=1}^{n_h}$, each within $[0, 1)$, to include the inequality constraints
\begin{multline}
    (1 - \phi_i)(t_i - t_{i-1})
    \leq 
    \da t_i - \da t_{i-1} 
    \leq
    \phi_i (t_f - t_0) +\\ (1 - \phi_i)(t_i - t_{i-1})
\end{multline}
for each sub-interval $i \in \{1, 2, ..., n_h\}$, where $\da{t}_0 \, \equiv t_0$ and $\da{t}_{n_h} \, \equiv t_f$.
In the special case where all flexibility parameters $\{ \phi_i \}_{i=1}^{n_h}$ equal~0\%, the fixed partitioning \eqref{eq:rigid} is recovered.

\subsection{Dynamic Variables}
For the $i^\text{th}$ (flexible) interval, the states and inputs are discretized by interpolating polynomials. To simplify notation, the interpolations are defined in the  normalized time domain $\tau \in [-1, 1]$, mapped to $t \in [\da t_{i-1}, \da t_i]$ via
\begin{equation}
    \gamma(\tau; \da t_{i-1}, \da t_i) :=
    \frac{\da t_i - \da t_{i-1}}{2} \tau +
    \frac{\da t_{i-1} + \da t_i}{2}.
\end{equation}
Using a Legendre-Gauss-Radau (LGR) collocation method~\cite{kameswaran_convergence_2008} of degree $n$, the interpolation points $\{ \tau_j \}_{j=0}^n$ are the $n$ LGR collocation points, with the extra point $\tau_n = 1$.

\begin{figure}
    \centering
    \includegraphics[width=\linewidth]{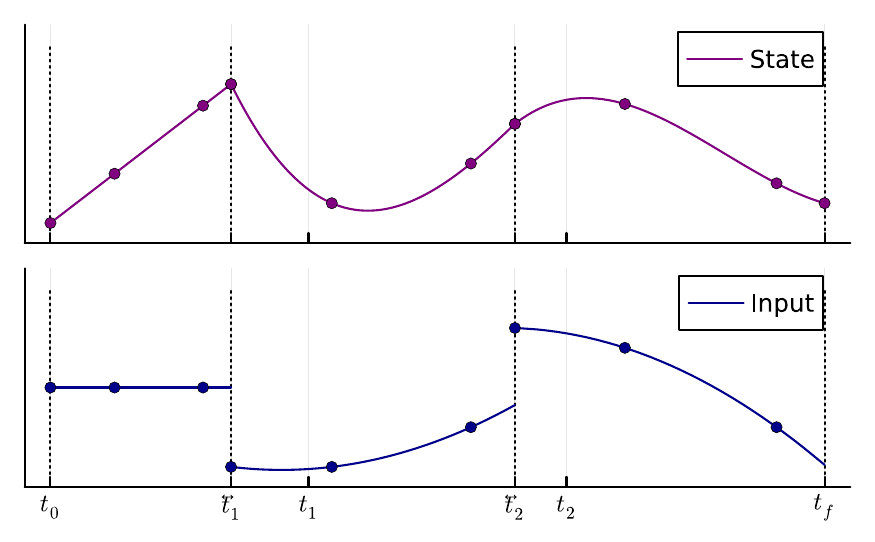}
    \vspace*{-9mm}\caption{Representations of flexible state and input discretizations for an LGR collocation method of degree $n = 3$.}
    \label{fig:dynamic_variables}
\end{figure}

The function $\wt x_n : \mb [{-}1, 1] \times \mb R^{(n+1) \times n_x} \rightarrow \mb R^{n_x}$ approximates the states with $n_x$ Lagrange interpolating polynomials of degree $n$ as
\begin{equation}
    \wt x_n(\tau; x_i) := \sum_{j=0}^{n}
    \Bigg( x_{i,j}
    \prod_{\substack{k = 0\\ k \neq j}}^{n} \frac{\tau - \tau_k}{\tau_j - \tau_k}
    \Bigg),
\end{equation}
where $x_{i,j}$ are the states at the $j^{\text{th}}$ interpolation point of the~$i^{\text{th}}$ sub-interval and $x_i$ is $\{ x_{i,j} \}_{j=0}^n$. The continuity between sub-intervals is preserved by enforcing 
\begin{equation}
    \wt x_n(1, x_i) = \wt x_n(-1; x_{i-1}),
\end{equation}
for $i \in \{2,..., n_h\}$.
The function $\wt u_n : \mb [{-}1, 1] \times \mb R^{n \times n_u} \rightarrow \mb R^{n_u}$ approximates the inputs with $n_u$ Lagrange interpolating polynomials of degree $n-1$ as
\begin{equation}
    \wt u_n(\tau; x_i) := \sum_{j=0}^{n-1}
    \Bigg( u_{i,j}
    \prod_{\substack{k = 0\\ k \neq j}}^{n-1} \frac{\tau - \tau_k}{\tau_j - \tau_k}
    \Bigg),
\end{equation}
where $u_{i,j}$ are the inputs at the $j^{\text{th}}$ collocation point of the~$i^{\text{th}}$ sub-interval and $u_i$ is $\{ u_{i,j} \}_{j=0}^{n-1}$.
Figure~\ref{fig:dynamic_variables} illustrates the discretization of the dynamic variables in flexible sub-intervals.

Other variants of pseudo-spectral methods can also be used, such as those based on Chebyshev polynomials. Additionally, pseudo-spectral methods based on Birkhoff interpolation \cite{koeppen_fast_2019} are also compatible with the proposed framework.

\subsection{Cost Function}
The boundary cost of the discretized states is simply
\begin{equation}
    m(\wt x_n(-1; x_0), \wt x_n(1; x_{n_h})).
\end{equation}
The time-integrated cost is numerically approximated by Gaussian quadrature as
\begin{multline}
    \sum_{i=1}^{n_h} \frac{\da t_i - \da t_{i-1}}{2}
    \sum_{j=0}^{n-1} w_j \ell \Big(
    \wt x_n(\tau_j; x_i),\\
    \wt u_n(\tau_j; u_i),
    \gamma \Big( \tau_j, \da t_{i-1}, \da t_i\Big)
    \Big),
\end{multline}
where $\{ \tau_j \}_{j=0}^{n-1}$ and $\{ w_j \}_{j=0}^{n-1}$ are the $n$ LGR quadrature points and weights, respectively.

\subsection{Equality Constraints}

The boundary conditions of the discretized states are simply enforced by
\begin{equation}
    b(\wt x_n(-1; x_0), \wt x_n(1; x_{n_h})) = 0.
\end{equation}
The dynamic equations are enforced at every collocation point $\{ \tau_j\}_{j=0}^{n-1}$ by setting
\begin{multline}
    r\Bigg(
    \frac{2\dot{\wt x}_n(\tau_j; x_i)}{\da t_i - \da t_{i-1}},
    \wt x_n(\tau_j, x_i),
    \wt u_n(\tau_j; u_i),\\
    \gamma(\tau_j, \da t_{i-1}, \da t_i)
    \Bigg) = 0,
\end{multline}
for every sub-interval $i \in \{1, 2,..., n_h\}$.

\subsection{Inequality Constraints}

In pseudo-spectral methods, the inequality constraints in~\eqref{eq:dop} are often only enforced at the interpolation points, i.e.
\begin{equation}
    x_{i,j} \in \mb X, \quad u_{i,j} \in \mb U.
\label{eq:lazy_bounds}
\end{equation}
This does not necessarily imply that
\begin{equation}
    \wt x_n(\tau, x_i) \in \mb X, \quad \wt u_n(\tau, u_i) \in \mb U, \quad \forall \tau \in [-1, 1]
\label{eq:safe_bounds}
\end{equation}
is satisfied for all sub-intervals $i \in \{1, 2,..., n_h\}$.
On the other hand, constraining the Bernstein coefficients of $\wt x$ and~$\wt u$ ensures that \eqref{eq:safe_bounds} is satisfied.

\subsection{Bounds on Interpolating Polynomials}
Let $\wt y : [{-}1, 1] \times \mb R^{n_p+1} \rightarrow \mb R$, $ (\tau, y) \mapsto \wt y(\tau; y)$ be the Lagrange interpolating polynomial at points $\{\tau^y_j\}_{j=0}^{n_p}$ with coefficients $y$. The Bernstein coefficients of $\wt y(\cdot; y)$ can be obtained by
\begin{equation}
    \beta = C y,
    \quad C = B V^{-1},
\label{eq:lagrange-bernstein}
\end{equation}
where $V$ is the Vandermonde matrix for the linearly scaled points $\{0.5 \tau^y_j + 0.5\}_{j=0}^{n_p}$.
Hence, for the scalars $y_\ell < y_u$, 
\begin{equation}
    y_\ell \leq C y \leq y_u
    \implies
    y_\ell \leq \wt y(\tau; y) \leq y_u,
    \,
    \forall \tau \in [-1, 1].
\label{eq:good_bounds}
\end{equation}

This approach is used to enforce the inequality constraints of \eqref{eq:dop} on the discretized dynamic variables, thus satisfying
\begin{equation}
    \wt x(\tau; x_i) \in \mb X,
    \quad
    \wt u(\tau; u_u) \in \mb U,
    \quad
    \forall \tau \in [-1, 1],
\end{equation}
for every sub-interval $i \in \{1,2,...,n_h\}$.

\subsection{Discretized Problem}

The resulting discretized optimization problem is
\begin{align*}
    \begin{array}{rlr}
        \ds{\min_{x, u, \da t}}
        &\multicolumn{2}{l}{
            m \big( \wt x_n(-1; x_0), \wt x_n(1; x_{n_h}) \big) +
            \ds{\sum_{i=1}^{n_h}} \Bigg[ \frac{\da t_i - \da t_{i-1}}{2}}\\
        &\multicolumn{2}{r}{
            \ds{\sum_{j=0}^{n-1}} w_j \ell \big(
            \wt x_n(\tau_j; x_i),
            \wt u_n(\tau_j; u_i),
            \gamma (\tau_j, \da t_{i-1}, \da t_i)
            \big) \Bigg],
        }\\
        \text{s.t.}
        &b \big(
            \wt{x}(-1, x_0),
            \wt{x}(1; x_{n_h})
        \big) = 0,\\
        &\multicolumn{2}{l}{
            r \Big(
                \frac{2\dot{\wt x}_n(\tau_j; x_i)}{\da t_i - \da t_{i-1}},
                \wt x_n(\tau_j, x_i),
                \wt u_n(\tau_j; u_i),}\\
        &\multicolumn{2}{r}{
                \gamma(\tau_j, \da t_{i-1}, \da t_i)
            \Big) = 0, \quad \forall j \in \{1, 2, ..., n\},
        }\\
        &C_{n+1} x_{i,:,k_x} \in \mb{X}_{k_x}, \quad
        &\forall k_x \in \{1, 2, ..., n_x\},\\
        &C_{n} u_{i,:,k_u} \in \mb{U}_{k_u}, 
        &\forall k_u \in \{1, 2, ..., n_u\},\\
        &\da{t}_{i+1} - \da{t}_i \in \mb{T}_{\phi}
        &\forall i \in \{1, 2, ..., n_h\},
    \end{array}
    \tag{$\text{P}_n$} 
\end{align*}
where $C_n$ is the transformation matrix from interpolation coefficients to Bernstein coefficients, as in \eqref{eq:lagrange-bernstein}, for a polynomial of degree $n$.  


\section{Examples}
\label{sec:example}
The proposed method is demonstrated on two example DOPs: the Bryson-Denham problem, and a constrained cart-pole swing-up problem. Numerical results were obtained using Ipopt~\cite{wachter_implementation_2006} and JuMP~\cite{lubin_jump_2023} software packages with double-precision floating-point representations.

\subsection{Example DOPs}
The Bryson-Denham problem \cite[Sect. 3.11]{bryson_applied_1975} consists of a single input, with double-integrator dynamics on a state $\bs r$, and a parametrizable inequality constraint, here chosen as
\begin{equation}
    \bs{r}(t) \leq 0.2 \quad \forall t \in [0, 1].
\end{equation}
This problem was selected given that an analytical solution exists\cite[Sect. 3.11]{bryson_applied_1975}, allowing for a complete assessment of approximate solutions.

The cart-pole swing-up problem~\cite[Sect.~6]{kelly_introduction_2017}~\cite[App.~E.1]{kelly_introduction_2017} consists of a single input, for the force acting on the cart, and four states $\bs q_1, \bs q_2, \dot{\bs q}_1, \dot{\bs q}_2$, where $\bs q_1$ is the position of the cart and $\bs q_2$ is the angle of the pole. Because none of the inequality constraints are active at the solution to the original problem, we consider the case where the cart's position is further constrained to satisfy
\begin{equation}
    0 \leq \bs q_1(t) \leq 1, \quad \forall t \in [t_0, t_f].
\end{equation}

\subsection{Approximate Solutions}

For both problems, the following discretization approaches have been used to obtain approximate solutions:
\begin{enumerate}[label=(\alph*)]
    \item Equispaced sub-intervals, with inequality constraints enforced at the interpolation points, as per \eqref{eq:lazy_bounds};
    \item Equispaced sub-intervals, with inequality constraints enforced on the Bernstein coefficients, as per \eqref{eq:good_bounds};
    \item Flexible sub-intervals, with inequality constraints enforced on the Bernstein coefficients, as per \eqref{eq:good_bounds}.
\end{enumerate}

\begin{figure}
    \centering
    \includegraphics[width=\linewidth]{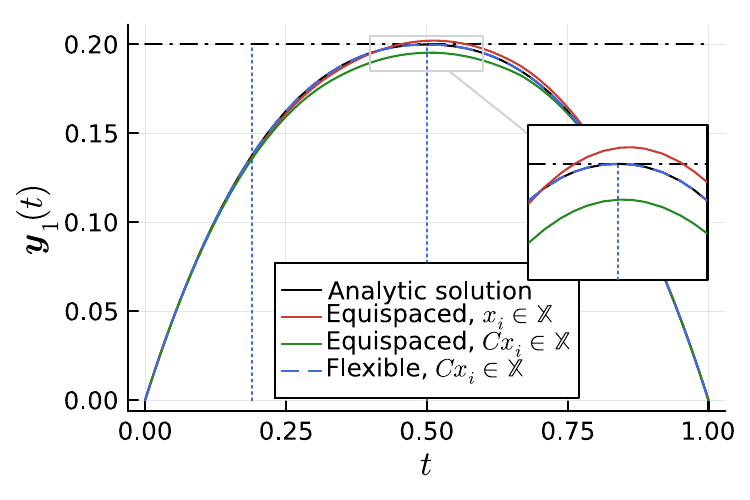}
    \vspace*{-9mm}\caption{Trajectories of $\bs r$ for the three discretizations, with dashed lines indicating the flexible sub-intervals.}
    \label{fig:bryson_denham_solutions}
\end{figure}

\begin{figure}
    \centering
    \includegraphics[width=\linewidth]{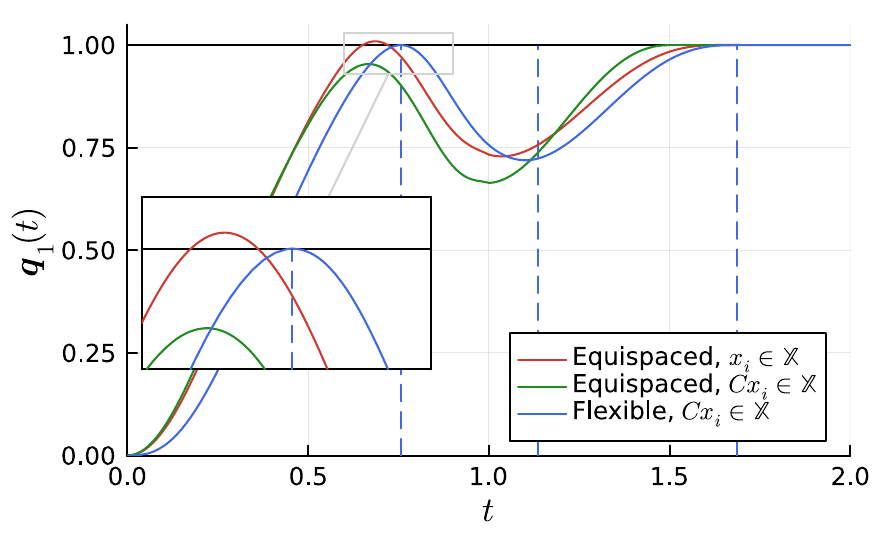}
    \vspace*{-9mm}\caption{Trajectories of the cart's position for the three discretizations, with dashed lines indicating the flexible sub-intervals.}
    \label{fig:cartpole_solutions}
\end{figure}

Figure~\ref{fig:bryson_denham_solutions} shows approximate solutions of $\bs r$, using the three approaches with LGR collocation degree $n = 3$, $n_h = 3$ sub-intervals, and $\phi = 50\%$ flexibility. Figure~\ref{fig:cartpole_solutions} shows approximate solutions of $\bs q_1$ with LGR collocation degree $n = 8$, $n_h = 4$ sub-intervals, and $\phi = 50\%$ flexibility. 

In both cases, approach (a) violates the inequality constraint, whereas there are no violations with approaches (b) and (c), as expected.
It can also be seen how approach (b) is rather conservative, in contrast to approach (c). In these examples, as with many others, there is a cost incentive to operate near the constraints.

\subsection{Assessment Criteria}
To assess an approximate solution $(x^*, u^*)$, the following criteria are considered. 

\subsubsection{Cost}
The cost expressions for the Bryson-Denham problem, and the cart-pole swing-up problems are respectively
\begin{equation}
    \int_{t_0}^{t_f} \frac{1}{2}u^*(t)^2 \textrm{d}t \quad \text{and} \quad 
    \int_{t_0}^{t_f} u^*(t)^2 \textrm{d}t.
\label{eq:cost_integration}
\end{equation}

\subsubsection{Inequality Constraint Violation}

We define the violation function for a scalar-valued trajectory $y : [t_0, t_f] \rightarrow \mathbb R$ with lower and upper constraints $y_\ell$ and $y_u$ respectively, as
\begin{equation}
v(t; y, y_\ell, y_u) :=
    \begin{cases}
        y_\ell - y(t),  & \text{if $y(t) < y_\ell$}\\
        y(t) - y_u,     & \text{if $y(t) > y_u$}\\
        0,              & \text{otherwise.}
    \end{cases}
\end{equation}
The \emph{total} inequality constraint violation is defined as the sum of violation norms for the input and the states, i.e. 
\begin{equation}
    \| v(\cdot; u^*, u_\ell, u_u) \|_2
    +
    \sum_{k = 1}^{n_x} \| v(\cdot; x^*_k, x_{\ell, k}, x_{u, k}) \|_2,
\label{eq:inequality_integration}
\end{equation}
where the $L_2$-norm $\| f \|_2 := \sqrt{\int_{t_0}^{t_f} |f(t)|^2 \textrm{d}t}$, for $f : [t_0, t_f] \rightarrow \mb R$.
The input constraints are given by $u_\ell$ and $u_u$, and the $k^\text{th}$ state constraints are given by $x_{\ell, k}$ and $x_{u,k}$.

\subsubsection{Dynamic Constraint Violation}

This is defined as the average $L_2$-norm of the violation of the four dynamic equations, i.e.
\begin{equation}
    \frac{1}{n_x}
    \sum_{k=1}^{n_x} 
    \| r_k(\dot{x}^*(\cdot), x^*(\cdot), u^*(\cdot), \cdot) \|_2.
\label{eq:dynamics_integration}
\end{equation}

\subsection{Convergence}

\begin{figure}
    \centering
    \includegraphics[width=\linewidth]{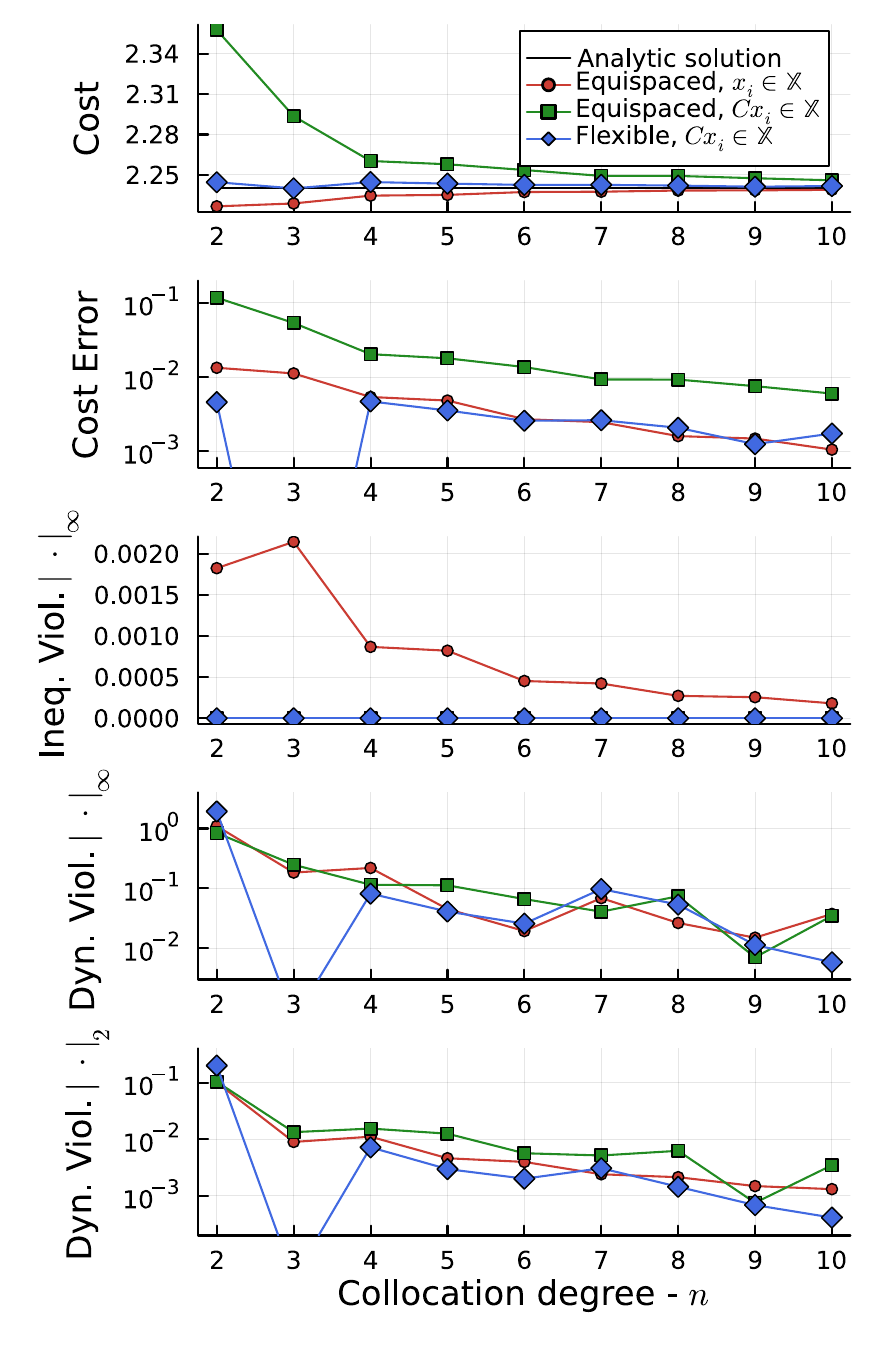}
    \vspace*{-9mm}\caption{Assessment of numerical solutions to the Bryson-Denham problem, with dynamic constraint violation on log scales.}
    \label{fig:bryson_denham_convergence}
\end{figure}

\begin{figure}
    \centering
    \includegraphics[width=\linewidth]{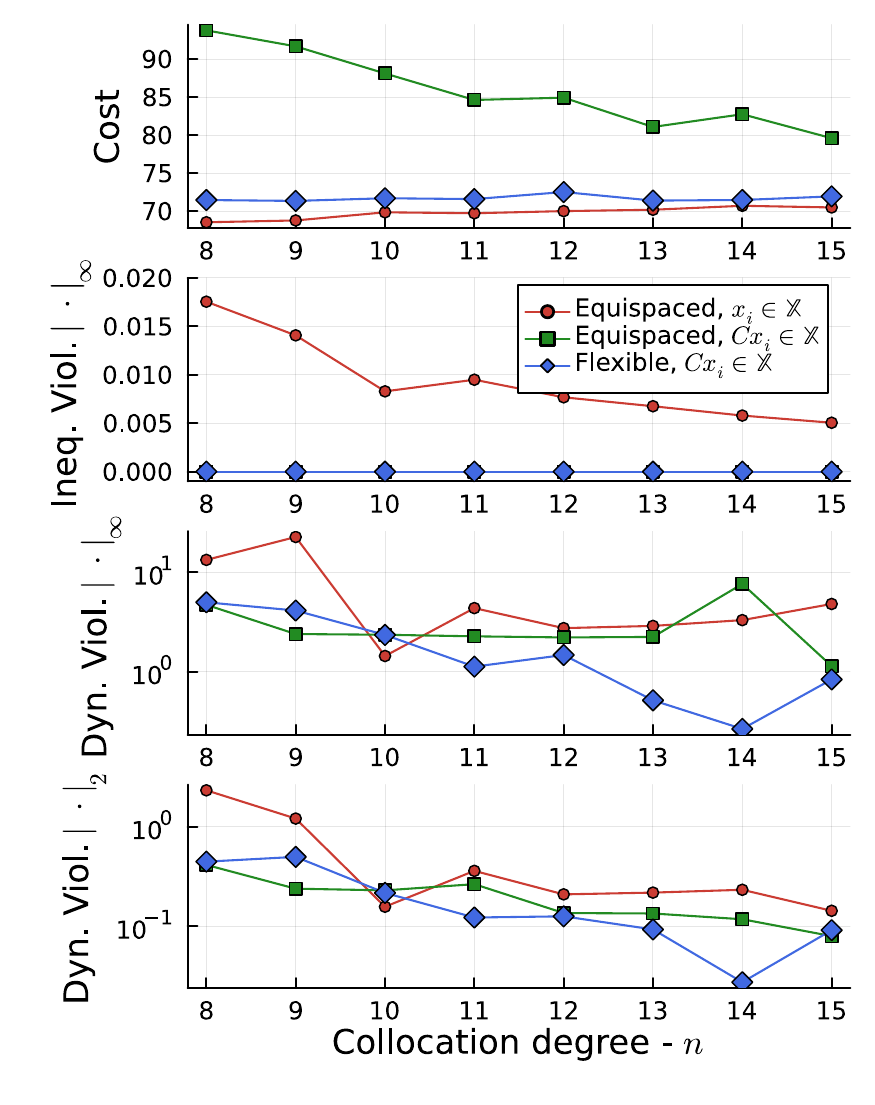}
    \vspace*{-9mm}\caption{Assessment of numerical solutions to the cart-pole swing-up problem, with dynamic constraint violation on log scales.}
    \label{fig:cartpole_convergence}
\end{figure}

Figure~\ref{fig:cartpole_convergence} shows a comparison of approaches (a), (b) and~(c), for each of the three criteria.
Even with higher polynomial degrees, approach (a) continues to violate the inequality constraints.
With this violation, approach (a) is able to obtain a smaller cost, in comparison with (b) and~(c).
Approach (b) reports a significantly higher cost, due to the conservative Bernstein bounds. The sub-interval flexibility allows approach (c) to eliminate the conservatism of the Bernstein bounds and obtain a smaller cost, in comparison with approach (b).

It should be noted, however, that for lower $n$, the solutions exhibit a higher dynamic constraint violation, which, in practice, may demerit the large differences in cost.  

\subsection{Sub-Interval Flexibility}

\begin{figure}
    \centering
    \includegraphics[width=\linewidth]{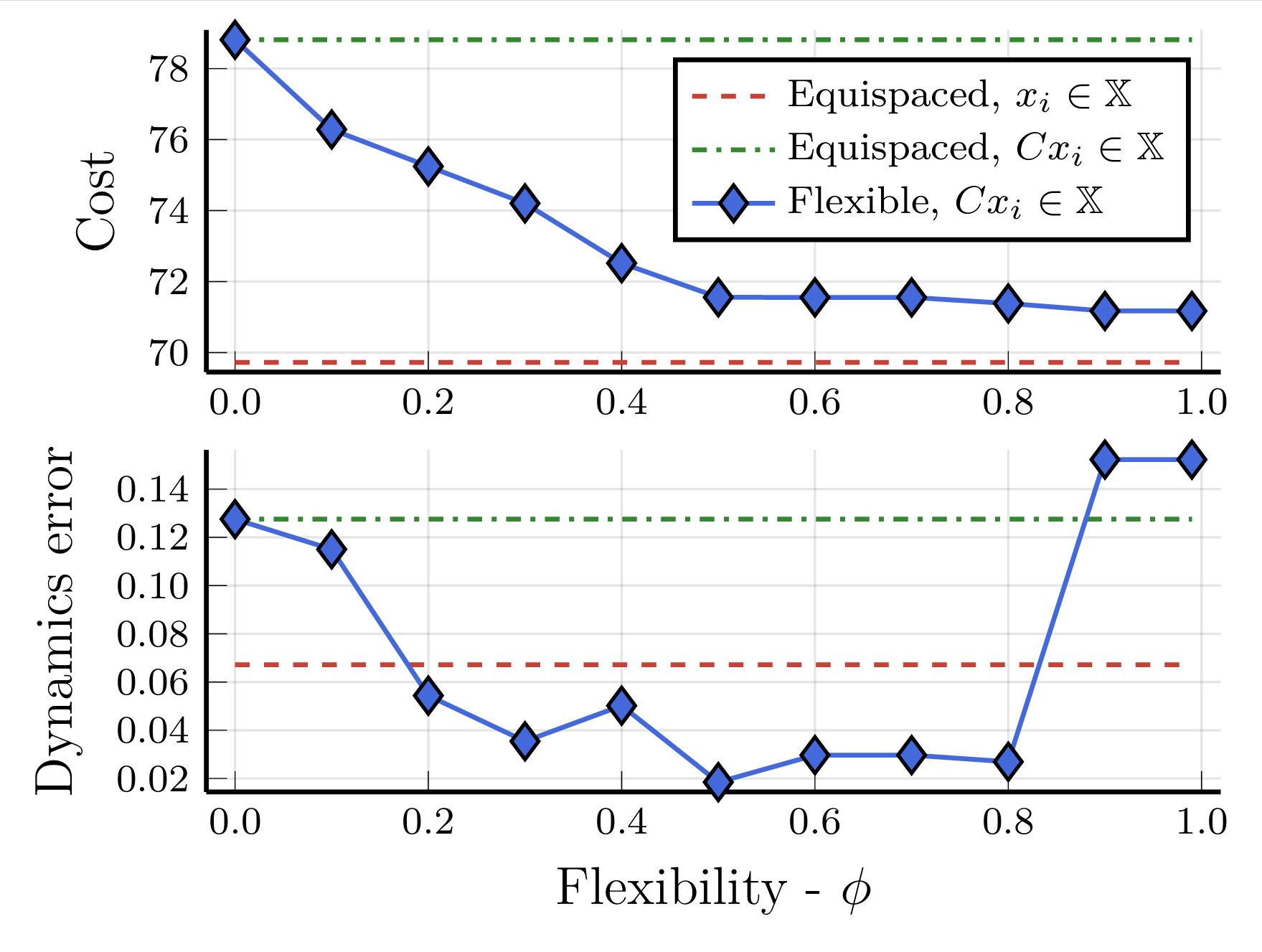}
    \vspace*{-9mm}\caption{Cost and dynamic constraint violation for a range of flexibility parameters for the cart-pole swing-up problem. Dashed lines compare the solutions of approaches (a) and (b).}
    \label{fig:cartpole_flexibility}
\end{figure}

Figure~\ref{fig:cartpole_flexibility} shows the impact of the flexibility parameter on the cost and dynamic error.
As expected, in the special case of $\phi = 0\%$, approach (c) is indistinguishable from approach (b).
For $n = 7$, a flexibility of $\phi = 50\%$ allows for a significant decrease in cost, without compromising on the dynamic constraint violation. Ample flexibility may create large sub-intervals, resulting in locally less dense discretizations, thus increasing the overall dynamic constraint violation of the solution.


\section{Conclusion}
\label{sec:conclusion}

A pseudo-spectral method for DOPs was presented that not only rigorously enforces inequality constraints but also allows for tight polynomial bounds to be achieved via flexible sub-intervals.
A proof was provided, demonstrating the ability of a piecewise-monotone polynomial to be tightly constrained using a finite set of sub-intervals.

Flexible sub-interval discretizations may also result in more challenging optimization problems. This is not only due to the introduction of non-linearities in the dynamic equations but also due to non-unique solutions, given the different combinations of equally optimal sub-interval lengths.

Further research could explore convexification and regularization techniques, as well as conditions under which tightly-constrained trajectories correspond to stationary points of the optimization problems.

\appendices

\section{Example Polynomial Coefficients}
\label{ap:coefficients}
The polynomials in Figure~\ref{fig:counter_examples} are defined by Lagrange interpolation of the points {1.0, 0.4, -0.2, -1.0}, and { -1.0, -0.8, -0.6, -0.4, -0.2, 0.0, 0.2, 0.8, 1.0}, at the LGR nodes.

\section{Sine Approximation Problem}

The function approximation problem in Section~\ref{sec:introduction} is defined as finding $\bs y : [0, 1] \rightarrow \mb R$ that minimizes
\begin{equation}
    \int_0^1(\sin(2 \pi t) - \bs y(t))^2 \textrm{d}t,
\label{eq:sine_integral}
\end{equation}
subject to the inequality constraints $-1 \leq \bs y(t) \leq 1, \forall t \in [0, 1]$. In the case of flexible sub-intervals, \eqref{eq:sine_integral} is approximated by 
\begin{equation}
    \sum_{i=1}^{3}
    \frac{\da{t}_i - \da{t}_{i-1}}{2}
    \sum_{q=0}^{n_q} w_q 
    \big( \sin(2 \pi \gamma(\tau_q, \da t_{i-1}, \da t_i)) - \wt y(\tau_q; y_i)
    \big)^2,
\end{equation}
where $\{\tau_q\}_{q=0}^{n_q}$ and $\{w_q\}_{q=0}^{n_q}$ are the Legendre-Gauss-Lobatto (LGL) quadrature points and weights, respectively. And $\wt y$ is an interpolating polynomial using $n_p + 1$ LGL points. It is chosen that $n_q = n_p + 2$.

\section{Numerical Integration}

The numerical integration in \eqref{eq:cost_integration}, \eqref{eq:inequality_integration}, and~\eqref{eq:dynamics_integration} was performed using the software package QuadGK.jl (version 2.9.4, available from \texttt{https://github.com/JuliaMath/QuadGK.jl}) with the default options.

\bibliographystyle{ieeetr}
\bibliography{references}

\end{document}